\theoremstyle{plain}
\newtheorem{theorem}{Theorem}[section]
\newtheorem{conjecture}[theorem]{Conjecture}
\newtheorem{lemma}[theorem]{Lemma}
\theoremstyle{definition}
\theoremstyle{remark}
\title{Non-left-orderability of cyclic branched covers of pretzel knots $P(3,-3,-2k-1)$}
\author{Lin Li \and Zipei Nie}
\begin{document} 
\maketitle
\abstract{We prove the non-left-orderability of the fundamental group of the $n$-th fold cyclic branched cover of the pretzel knot $P(3,-3,-2k-1)$ for all integers $k$ and $n\ge 1$. These $3$-manifolds are $L$-spaces discovered by Issa and Turner.} 
	\section{Introduction}
	A nontrivial group $G$ is called left-orderable, if and only if it admits a total ordering $\le$ which is invariant under left multiplication, that is, $g\le h$ if and only if $fg\le fh$. An $L$-space is a rational homology $3$-sphere with minimal Heegaard Floer homology, that is, $\dim\widehat{HF} =|H_1(Y)|$. The following equivalence relationship is conjectured by Boyer, Gordon and Watson.

\begin{conjecture} \label{BGW-conj} \cite{BGW}
An irreducible rational homology $3$-sphere is an $L$-space if and only if its fundamental group is not left-orderable.
\end{conjecture}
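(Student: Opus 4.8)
The plan is to prove the two implications of the stated equivalence separately, bridging both through the intermediate condition that $Y$ admit a co-orientable taut foliation; this is the tripartite strategy underlying the broader L-space conjecture, and for the bare equivalence in Conjecture~\ref{BGW-conj} it appears to be the only systematic route. For the forward implication, that an $L$-space $Y$ has non-left-orderable $\pi_1(Y)$, I would first invoke the theorem of Ozsv\'ath--Szab\'o that any closed orientable $3$-manifold carrying a co-orientable taut foliation satisfies $\dim\widehat{HF}>|H_1(Y)|$; taking the contrapositive, an $L$-space admits no such foliation. It then remains to pass from the absence of taut foliations to non-left-orderability, for which there is no direct route. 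Instead I would argue the contrapositive, building from a left-ordering of $\pi_1(Y)$ a faithful action on $\mathbb{R}$ and, via the foliation-producing machinery of Calegari--Dunfield and Boyer--Rolfsen--Wiest, a co-orientable taut foliation on $Y$. Manufacturing such a foliation from a bare left-ordering is the first genuine obstacle, and it is unresolved in general.

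For the backward implication, that non-left-orderability of $\pi_1(Y)$ forces $Y$ to be an $L$-space, I would again pass to the contrapositive: assume $Y$ is not an $L$-space and attempt to construct a co-orientable taut foliation, then apply the implication, established in wide generality, that a co-orientable taut foliation makes $\pi_1(Y)$ left-orderable (through its action on the leaf space of the lifted foliation). The crux is the construction of taut foliations on \emph{every} irreducible rational homology $3$-sphere that fails to be an $L$-space. For Seifert-fibered spaces this is governed by the Eisenbud--Hirsch--Jankins--Neumann criterion together with the analysis of Boyer--Clay, and for many graph manifolds by splicing and gluing results; but the hyperbolic case, where neither the Heegaard Floer data nor the group actions are combinatorially constrained, is where the existing techniques give the least purchase.

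The main obstacle, on both sides, is thus the construction of co-orientable taut foliations: from a left-ordering in the forward direction, and on an arbitrary non-$L$-space in the backward direction. Neither construction is known in full generality, which is precisely why the equivalence remains a conjecture rather than a theorem, and the hyperbolic setting is the hardest unbroken case. A realistic line of progress is therefore to sidestep foliations entirely in the forward direction and verify non-left-orderability one family at a time, by proving that $\pi_1(Y)$ admits no nontrivial homomorphism to $\mathrm{Homeo}^+(\mathbb{R})$; this is the strategy the remainder of the present paper carries out for the $L$-spaces $\Sigma_n\big(P(3,-3,-2k-1)\big)$, adding them to the list of verified instances of Conjecture~\ref{BGW-conj}.
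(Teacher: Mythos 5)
The statement you were asked about is Conjecture~\ref{BGW-conj} --- the Boyer--Gordon--Watson ($L$-space) conjecture, quoted from \cite{BGW}. The paper does not prove it and does not claim to: there is no ``paper's own proof'' to compare yours against. What the paper proves is a family of instances consistent with the conjecture (non-left-orderability of $\pi_1$ of the $L$-spaces $\Sigma_n(P(3,-3,-2k-1))$), and it does so by purely group-theoretic means --- Brunner's presentation plus the fixed-point method via Holland's theorem --- with no foliation theory anywhere. Your proposal correctly stops short of claiming a proof, correctly identifies why both implications are open (each currently routes through the construction of co-orientable taut foliations, which is unknown in general), and correctly frames the present paper's contribution as verifying instances rather than attacking the equivalence. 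That assessment matches the paper's own stance.

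One correction to your survey, though. You assert that the implication ``co-orientable taut foliation $\Rightarrow$ $\pi_1$ left-orderable'' is ``established in wide generality'' via the action on the leaf space of the lifted foliation. It is not: this is itself one of the open legs of the $L$-space conjecture triangle. The implication that \emph{is} known in general is ``co-orientable taut foliation $\Rightarrow$ not an $L$-space'' (Ozsv\'ath--Szab\'o for smooth foliations, extended to $C^0$ by Bowden and by Kazez--Roberts). To extract a left order from a taut foliation one needs the leaf space of the pulled-back foliation in the universal cover to be homeomorphic to $\mathbb{R}$ (the $\mathbb{R}$-covered case) or some comparable special structure; in general that leaf space is a non-Hausdorff simply connected $1$-manifold, and the Calegari--Dunfield universal-circle machinery in the atoroidal case yields a faithful action on $S^1$ --- hence a circular order --- not a left order. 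So even granting a taut foliation on every non-$L$-space, your backward implication would still have a gap.
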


Issa and Turner \cite{IT} studied a family of pretzel knots named $P(3,-3,-2k-1)$. They defined a family of two-fold quasi-alternating knots $L(k_1,k_2,\ldots, k_n)$, as shown in Figure \ref{fig:link}, and constructed the homeomorphism $\Sigma_n(P(3,-3,-2k-1)) \cong \Sigma_2(L(-k,-k,\ldots, -k))$. Because two-fold quasi-alternating links are Khovanov homology thin \cite{SS}, the double branched covers of them are $L$-spaces \cite{OS05}. In this way, they proved that all $n$-th fold cyclic branched covers $\Sigma_n(P(3,-3,-2k-1))$ are $L$-spaces.

\begin{figure}[!htbp]\label{fig:link}
\begin{tikzpicture}
	\node[anchor=south west,inner sep=0] (image) at (0,0) {\includegraphics[width=0.9\textwidth]{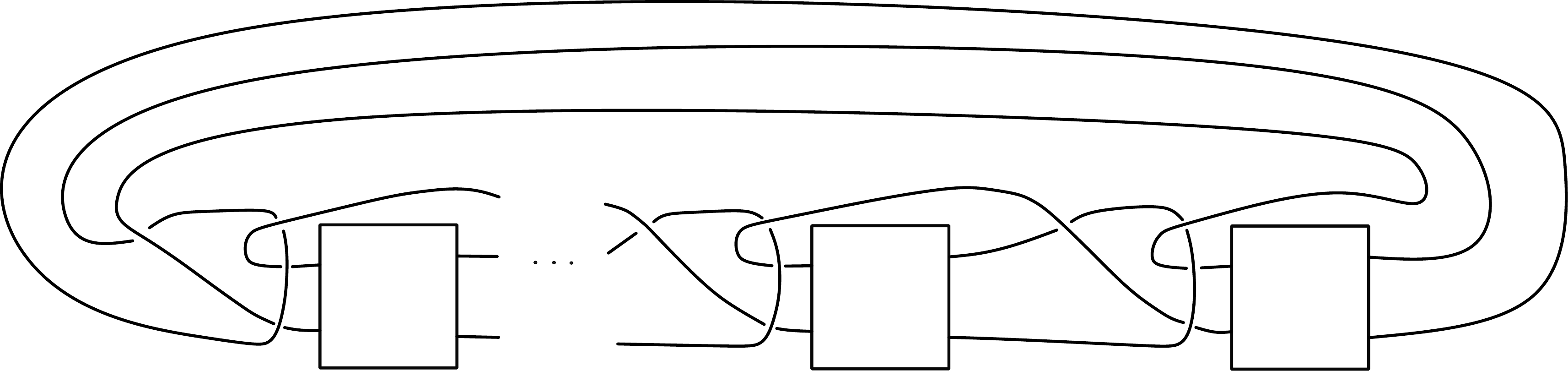}};
	\begin{scope}[x={(image.south east)},y={(image.north west)}]
        \node at (0.246,0.21) {\large $k_1$};
        \node at (0.565, 0.21){\large $k_{n-1}$};
        \node at (0.831, 0.21){\large $k_n$};
    \end{scope}
\end{tikzpicture}
\caption{The link $L(k_1,k_2,\ldots, k_n)$. This diagram is copied from \cite{IT}.}
\end{figure}

We prove the following result which is consistent with Conjecture \ref{BGW-conj}.

\begin{theorem}
    For any integers $k_1,k_2,\ldots, k_n$, the double branched cover of $L(k_1,k_2,\ldots, k_n)$ has non-left-orderable fundamental group.
\end{theorem}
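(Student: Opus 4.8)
The plan is to produce an explicit finite presentation of $G=\pi_1(\Sigma_2(L(k_1,\dots,k_n)))$ directly from the diagram in Figure \ref{fig:link}, and then rule out any left-invariant order by a sign-propagation argument on the generators. To obtain the presentation I would start from the Wirtinger presentation of the link group $\pi_1(S^3\setminus L(k_1,\dots,k_n))$, in which each arc gives a meridional generator and each crossing a relation, and then pass to the index-two subgroup corresponding to the double cover, namely the kernel of the total linking-number map to $\mathbb{Z}/2$. Folding the Wirtinger relations across the branch locus should yield a presentation whose generators $x_1,\dots,x_n$ are indexed by the $n$ twist regions, with the integer $k_i$ entering the $i$-th relation as an exponent coming from the $|k_i|$ crossings in that region, and with the fixed $3,-3$ pattern inherited from the pretzel contributing a rigid coupling between consecutive generators $x_{i-1},x_i,x_{i+1}$.

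With the presentation in hand, I would argue by contradiction. Assume $G$ is left-orderable and fix a left-invariant total order $\le$, equivalently its positive cone $P=\{g\in G:g>1\}$, a subsemigroup with $G\setminus\{1\}=P\sqcup P^{-1}$. Each generator then carries a well-defined sign in $\{+,-,0\}$, and the idea is to combine this sign information with the order inequalities forced by the relations to reach an impossible comparison. Because the presentation is essentially cyclic in the index $i$, I expect a \emph{monotonicity lemma}: the relation coupling $x_{i-1},x_i,x_{i+1}$ forces a strict comparison of one of these generators against its neighbor, determined by the signs at stage $i$, and this comparison propagates one index at a time. Running the propagation once around the cyclic index should then produce a strict inequality of the form $g<g$, the desired contradiction.

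The main obstacle is twofold. First, the twist-region relations change algebraic shape with the sign of $k_i$ and degenerate when $k_i=0$, so the monotonicity lemma must be proved uniformly across positive, negative, and zero twisting; I anticipate reducing everything to a single inequality of the shape ``if $a>1$ and $b>1$ then a specific word in $a,b$ and the $k_i$-th power is $>1$,'' verified case by case on $\mathrm{sign}(k_i)$ using only the order axioms and left-invariance. Second, I must ensure the propagation is genuinely forced: the comparison delivered at each index has to point in a consistent direction around the whole cycle, independent of the a priori unknown signs of the generators, so that the inequalities actually chain together. Pinning down this consistency, which should rely on the rigid coupling contributed by the $3,-3$ blocks, is where I expect the real work to lie.

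As a sanity check I would first run the argument for $n=1$ and for the all-equal family $k_1=\dots=k_n=-k$, recovering the $L$-spaces $\Sigma_n(P(3,-3,-2k-1))$ from the introduction. I would also dispatch separately any parameter values for which $G$ happens to be finite, since a nontrivial finite group has torsion and is therefore never left-orderable; the combinatorial argument is meant to cover the remaining infinite cases uniformly.
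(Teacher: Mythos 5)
Your outline points in roughly the right direction --- the paper also derives a cyclic presentation in generators indexed by the twist regions and propagates order inequalities once around the cycle to reach a contradiction --- but as written it has two genuine gaps. First, everything that would constitute the actual proof is deferred: the presentation is not computed (the paper uses Brunner's presentation of the double branched cover from the checkerboard coloring, arriving at $\langle a_i,b_i\mid a_{i+1}=b_i^{-1}a_ib_ia_i,\ a_i^{k_i}=b_ia_ib_ib_{i-1}^{-1}\rangle$, which is considerably cleaner than what Wirtinger plus passage to the index-two subgroup yields), and the ``monotonicity lemma'' is neither stated nor proved. You yourself flag the consistency of the propagation around the cycle as the place where the real work lies; that is precisely the content of the paper's Lemmas \ref{l-1} and \ref{l-2}, and without at least a precise statement of what is being propagated the proposal cannot be assessed as a proof.

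Second, and more importantly, the invariant that actually propagates in the paper is not the sign of a generator but the quantified statement $\forall m\,\left(b_i^{-1}a_{i+1}^{m}\ge 1\right)$, an inequality against \emph{all} powers of $a_{i+1}$, and such a statement cannot be seeded from a genuine total order merely by assigning signs to generators. The paper gets around this with the fixed point method: via Holland's theorem the order is realized by an action on $\mathbf{R}$ with no global fixed point, and the argument splits into two cases. If $a_1$ fixes a point $s$, one passes to the left \emph{preorder} that compares images of $s$, in which $a_1\ge 1$ and $a_1^{-1}\ge 1$ hold simultaneously; this is what makes the hypothesis $\forall m\,\left(b_n^{-1}a_1^{m}\ge 1\right)$ attainable, and the propagation then shows $s$ is a global fixed point, a contradiction. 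If $a_1$ has no fixed point, all conjugates of $a_1$ share a sign and the relations force every $a_i=1$, reducing to the finite case you mention at the end. Without this case split, or some substitute for it, your sign-propagation scheme will not close up: the comparison extractable from the relation at index $i$ depends on how $b_i^{-1}$ compares with arbitrarily large powers of $a_{i+1}$, not merely on whether each generator is positive or negative, so the chain of inequalities you hope to run around the cycle never gets started.
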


As a corollary, the fundamental group of the $n$-th fold cyclic branched cover of the pretzel knot $P(3,-3,-2k-1)$ is not left-orderable.

In Section \ref{sec:2}, we derive a presentation of the fundamental group of double branched cover of $L(k_1,k_2,\ldots, k_n)$.

In Section \ref{sec:3}, we prove our main result.

\section*{Acknowledgement}
The authors thank Wei Chen, Nathan Dunfield, Heng Liao, Luigi Lunardon and Hannah Turner for helpful discussions.

\section{The Brunner's presentation}\label{sec:2}

A tool to compute the fundamental group of the double branched cover of an unsplittable link is the Brunner's presentation \cite{Brunner}. One could also use the Wirtinger's presentation to derive an equivalent form, but the computation would be longer. Historically, Ito \cite{Ito} used the coarse Brunner's presentation, a generalized version of Brunner's presentation, to prove the non-left-orderability of double branched covers of unsplittable alternating links. Abchir and Sabak \cite{AS} proved the non-left-orderability of double branched covers of certain kinds of quasi-alternating links in a similar way.

Consider the checkerboard coloring of the knot diagram $D$ in Figure \ref{fig:link}. Let $G$ and $\tilde{G}$ be the decomposition graph
and the connectivity graph. They can be chosen as shown in Figure \ref{fig:G}.

\begin{figure}[!htbp]\label{fig:G}
\centering\begin{tikzpicture}[node distance={20mm}, main/.style = {draw, circle}] 
\node[main] (1) {$1$}; 
\node[main] (2) [right of=1] {$2$}; 
\node[main] (3) [above right of=2] {$3$}; 
\node[main] (4) [above of=3] {$4$};  
\node[main] (5) [above left of=4] {$5$}; 
\node[main] (6) [left of=5] {$6$}; 
\node[main] (7) [below left of=6] {$7$}; 
\node[main] (8) [below of=7] {$8$}; 
\node[main] (0) [above right=18mm and 5mm of 1]{$0$};
\draw[-] (1) -- node[midway, above , sloped] {$k_1$} (2);
\draw[-] (2) -- node[midway, above , sloped] {$+1$} (3);
\draw[-] (3) -- node[midway, above , sloped] {$k_2$} (4);
\draw[-] (4) -- node[midway, above , sloped] {$+1$} (5);
\draw[-] (5) -- node[midway, above , sloped] {$k_3$} (6);
\draw[-] (6) -- node[midway, above , sloped] {$+1$} (7);
\draw[-] (7) -- node[midway, above , sloped] {$k_4$} (8);
\draw[-] (8) -- node[midway, above , sloped] {$+1$} (1);
\draw[-] (0) -- node[midway, above , sloped] {$-2$} (1);
\draw[-] (0) -- node[midway, above , sloped] {$+1$} (2);
\draw[-] (0) -- node[midway, above , sloped] {$-2$} (3);
\draw[-] (0) -- node[midway, above , sloped] {$+1$} (4);
\draw[-] (0) -- node[midway, above , sloped] {$-2$} (5);
\draw[-] (0) -- node[midway, above , sloped] {$+1$} (6);
\draw[-] (0) -- node[midway, above , sloped] {$-2$} (7);
\draw[-] (0) -- node[midway, above , sloped] {$+1$} (8);
\end{tikzpicture} 
\begin{tikzpicture}[node distance={20mm}, main/.style = {draw, circle}] 
\node[main] (1) {$1$}; 
\node[main] (2) [right of=1] {$2$}; 
\node[main] (3) [above right of=2] {$3$}; 
\node[main] (4) [above of=3] {$4$};  
\node[main] (5) [above left of=4] {$5$}; 
\node[main] (6) [left of=5] {$6$}; 
\node[main] (7) [below left of=6] {$7$}; 
\node[main] (8) [below of=7] {$8$}; 
\node[main] (0) [above right=18mm and 5mm of 1]{$0$};
\draw[->] (2) -- node[midway, above , sloped] {$e_1$} (1);
\draw[->] (2) -- node[midway, above , sloped] {$b_4$} (3);
\draw[->] (4) -- node[midway, above , sloped] {$e_4$} (3);
\draw[->] (4) -- node[midway, above , sloped] {$b_3$} (5);
\draw[->] (6) -- node[midway, above , sloped] {$e_3$} (5);
\draw[->] (6) -- node[midway, above , sloped] {$b_2$} (7);
\draw[->] (8) -- node[midway, above , sloped] {$e_2$} (7);
\draw[->] (8) -- node[midway, above , sloped] {$b_1$} (1);
\draw[->] (0) -- node[midway, above , sloped] {$f_1$} (1);
\draw[->] (0) -- node[midway, above , sloped] {$g_1$} (2);
\draw[->] (0) -- node[midway, above , sloped] {$f_4$} (3);
\draw[->] (0) -- node[midway, above , sloped] {$g_4$} (4);
\draw[->] (0) -- node[midway, above , sloped] {$f_3$} (5);
\draw[->] (0) -- node[midway, above , sloped] {$g_3$} (6);
\draw[->] (0) -- node[midway, above , sloped] {$f_2$} (7);
\draw[->] (0) -- node[midway, above , sloped] {$g_2$} (8);
\end{tikzpicture} 
\caption{The decomposition graph $G$ and the connectivity graph $\tilde{G}$ for $n=4$.}
\end{figure}
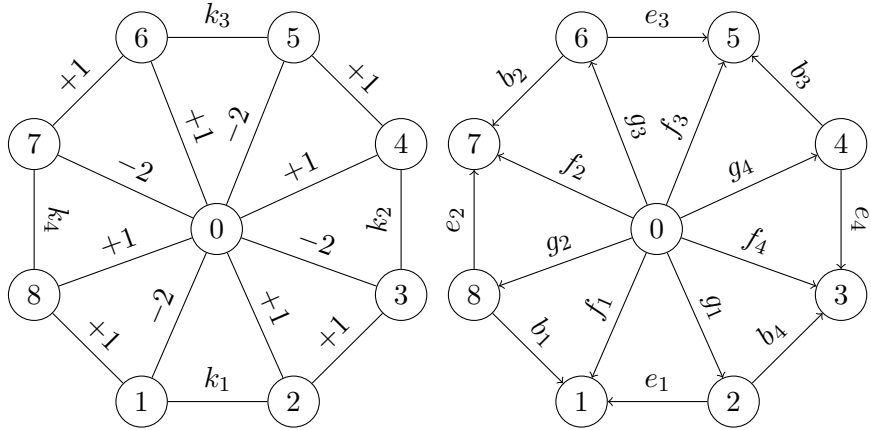

In the Brunner’s presentation of $\pi_1(\Sigma_2(L(k_1,k_2,\ldots, k_n)))$. Let $e_i, f_i, g_i,b_i$ ($1\le i\le n$) be the edge generators, and $a_i, c_i$ ($1\le i\le n$) be the region generators. Then the local edge relations are $e_i=a_i^{k_i}$, $b_i=c_i^{-1}$, $f_i=(a_{i}^{-1} c_i)^{-2}$ and $g_i=c_{i-1}^{-1} a_i$ ($1\le i\le n$). And the global cycle relations are $f_i^{-1} e_i g_i=1$ and $f_{i}^{-1} b_i g_{i+1}=1$ ($1\le i\le n$). Here the subscripts are considered modulo $n$.

By simplification, we get the following presentation of $\pi_1(\Sigma_2(L(k_1,k_2,\ldots,k_n)))$:	
$$\left \langle a_i,b_i| a_{i+1}=b_i^{-1} a_i  b_i a_i, a_i^{k_i}=b_i a_i b_i b_{i-1}^{-1}\right\rangle,$$
where $i=1,\ldots,n$ and we view subscripts modulo $n$.

	\section{The non-left-orderability}\label{sec:3}
We are going to prove our main non-left-orderability result. Let $\le$ be any left order on the group $\pi_1(\Sigma_2(L(k_1,k_2,\ldots,k_n)))$, then we have the following lemmas.

\begin{lemma}\label{l-1}
If $\forall m.\left(b_i^{-1} a_{i+1}^{m}\ge 1\right),$ then $\forall m_2\exists m_1 .\left(a_{i+1}^{-m_1} b_i^{-1} a_{i}^{m_2}\ge 1\right).$
\end{lemma}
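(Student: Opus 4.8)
The plan is to work entirely inside the fixed left order $\le$, using only the single defining relation $a_{i+1}=b_i^{-1}a_ib_ia_i$ together with the order axioms. First I would record the two facts about a left order that are legitimately available: $x\ge 1$ is equivalent to $gx\ge g$ for every $g$ (left-invariance), and $x\ge 1$ is equivalent to $x^{-1}\le 1$. Right multiplication, and "inverting an inequality between two arbitrary elements'', are not available, so every manipulation must keep the moving generator on the right. Under left-invariance the hypothesis $b_i^{-1}a_{i+1}^{m}\ge 1$ becomes $a_{i+1}^{m}\ge b_i$ for all $m$, i.e.\ $b_i$ is a lower bound for the entire two-sided sequence of powers of $a_{i+1}$; and the conclusion $a_{i+1}^{-m_1}b_i^{-1}a_i^{m_2}\ge 1$ becomes $b_i^{-1}a_i^{m_2}\ge a_{i+1}^{m_1}$. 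So the real content is: for each $m_2$, the element $b_i^{-1}a_i^{m_2}$ dominates at least one power of $a_{i+1}$.

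Second I would extract the algebraic identity that trades $a_i$ for $a_{i+1}$. A direct computation from the relation gives $a_{i+1}^{-1}b_i^{-1}a_i=a_i^{-1}b_i^{-1}$, equivalently $b_i^{-1}a_i=a_{i+1}\,a_i^{-1}b_i^{-1}$, and by an easy induction $b_i^{-1}a_i^{m}=(a_{i+1}a_i^{-1})^{m}b_i^{-1}$ for all $m$. This is the tool for peeling off one factor of $a_{i+1}$ at a time. It also settles the base case $m_2=0$: the hypothesis at $m=0$ reads $b_i^{-1}\ge 1=a_{i+1}^{0}$, so $m_1=0$ works there.

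Third I would set up an induction on $m_2$. Writing $b_i^{-1}a_i^{m_2}=(a_{i+1}a_i^{-1})\,b_i^{-1}a_i^{m_2-1}$ and left-multiplying the inductive hypothesis $b_i^{-1}a_i^{m_2-1}\ge a_{i+1}^{m_1'}$ by the element $a_{i+1}a_i^{-1}$ (legal, since left-multiplication preserves $\le$), the inductive step reduces to producing an integer $j$ with $a_i^{-1}a_{i+1}^{m_1'}\ge a_{i+1}^{j}$; running the same manipulation backwards treats negative $m_2$. In this way the whole lemma collapses to one domination statement, essentially the case $m_2=1$, namely that $b_i^{-1}a_i$ is not a strict lower bound of $\{a_{i+1}^{m}\}_{m\in\mathbb Z}$. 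I would prove this by contradiction: if $b_i^{-1}a_i<a_{i+1}^{m}$ for every $m$, then the identity propagates this to $a_i^{-1}b_i^{-1}<a_{i+1}^{m}$ for every $m$, and confronting these strict bounds with the hypothesis $a_{i+1}^{m}\ge b_i$ (again for every $m$) forces the powers of $a_{i+1}$ to descend below their own lower bound $b_i$, the desired contradiction.

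The main obstacle is exactly this last piece of cofinality bookkeeping, not any individual identity. Because only left-multiplication is available, the two-sided strength of the hypothesis, that $b_i$ bounds $a_{i+1}^{m}$ for all $m\in\mathbb Z$ with negative exponents included, must be used in precisely the right place to rule out $b_i^{-1}a_i$, and hence every $b_i^{-1}a_i^{m_2}$, sitting strictly below each power of $a_{i+1}$. Making the inductive step deliver a genuine power $a_{i+1}^{m_1}$ lying below $b_i^{-1}a_i^{m_2}$, rather than merely an unattained lower bound, is the delicate point and is where I expect most of the work to go; the role of the hypothesis is exactly to guarantee that this descent cannot run away to $-\infty$.
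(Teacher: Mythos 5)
Your reformulation via left-invariance, the commutation identity $b_i^{-1}a_i^{m}=(a_{i+1}a_i^{-1})^{m}b_i^{-1}$, and the base case $m_2=0$ are all correct, and that identity is indeed the engine of the paper's argument as well. The gap is in the inductive step. After left-multiplying the inductive hypothesis $b_i^{-1}a_i^{m_2-1}\ge a_{i+1}^{m_1'}$ by $a_{i+1}a_i^{-1}$, what you actually need is $\exists j.\,\bigl(a_i^{-1}a_{i+1}^{m_1'}\ge a_{i+1}^{j}\bigr)$, i.e.\ that a conjugate of $a_i^{-1}$ by a power of $a_{i+1}$ dominates some power of $a_{i+1}$. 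This is \emph{not} ``essentially the case $m_2=1$'': that case concerns $b_i^{-1}a_i$, not $a_i^{-1}a_{i+1}^{m_1'}$, and in a general left-ordered group an element can perfectly well sit strictly below every power of $a_{i+1}$, so the needed domination is not free. Your contradiction argument for $m_2=1$ itself does close (taking $m=0$ in $b_i^{-1}a_i<a_{i+1}^{m}$ and in the propagated $a_i^{-1}b_i^{-1}<a_{i+1}^{m}$ gives $b_i^{-1}<a_i<b_i\le 1\le b_i^{-1}$), but it yields no information about conjugates of $a_i$ by nonzero powers of $a_{i+1}$, which is exactly what every subsequent inductive step consumes.

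The paper does not induct on $m_2$ at all: for each fixed $m_2$ it assumes $\forall m_1.\bigl(a_{i+1}^{-m_1}b_i^{-1}a_i^{m_2}\le 1\bigr)$, combines this with the hypothesis to pin down the signs $a_i^{m_2}\le 1$, $(a_{i+1}a_i^{-1})^{m_2}\le 1$, hence $a_{i+1}^{m_2}\le 1$, and then --- the step your proposal is missing --- deduces $a_{i+1}^{-m}a_i^{m_2}a_{i+1}^{m}\le 1$ for every $m$ of the same sign as $m_2$. Feeding these conjugates into the telescoping expansion of $(a_{i+1}a_i^{-1})^{m_2}$ gives $a_{i+1}^{-m_2}(a_{i+1}a_i^{-1})^{m_2}\ge 1$, hence the explicit witness $m_1=m_2$ via $a_{i+1}^{-m_2}b_i^{-1}a_i^{m_2}=a_{i+1}^{-m_2}(a_{i+1}a_i^{-1})^{m_2}b_i^{-1}\ge 1$, contradicting the assumption. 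If you wish to keep the inductive framing, you must prove an analogous sign statement for conjugates of $a_i$ by powers of $a_{i+1}$; as written, the proposal defers the entire difficulty to a claim it never establishes.
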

\begin{proof}
By assumption, we have 
$$b_i^{-1}\ge 1.$$
Suppose, for the sake of contradiction, that for some integer $m_2$, 
$$\forall m_1.\left(a_{i+1}^{-m_1} b_i^{-1} a_{i}^{m_2}\le 1\right),$$ 
then we have $$ b_i^{-1} a_{i}^{m_2}\le 1.$$
Since $a_{i+1}=b_i^{-1} a_i  b_i a_i$, we have $$\left(a_{i+1} a_i^{-1}\right)^{m_2} b_i^{-1}= b_i^{-1} a_i^{m_2}.$$
Hence we have $$a_i^{m_2} \le 1,\left(a_{i+1}a_i^{-1}\right)^{m_2} \le 1,$$
which implies $$a_{i+1}^{m_2}\le 1.$$
By assumption, we have
$$\forall m.\left(a_{i+1}^{-m} a_i^{m_2}\le 1\right).$$
So we have
$$\forall \left(m \mbox{ has same sign as } m_2\right).\left(a_{i+1}^{-m} a_i^{m_2}a_{i+1}^{m} \le 1\right).$$
If $m_2\ge 0$, then 
$$\left(a_{i+1} a_i^{-1}\right)^{m_2} = a_{i+1}^{m_2} \left(a_{i+1}^{-m_2+1}a_i^{-1} a_{i+1}^{m_2-1}\right)\cdots\left(a_{i+1}^{-1}a_i^{-1} a_{i+1}\right) a_i^{-1}.$$
If $m_2<0$, then 
$$\left(a_{i+1} a_i^{-1}\right)^{m_2}= a_{i+1}^{m_2} \left(a_{i+1}^{-m_2}a_i a_{i+1}^{m_2}\right)\cdots\left(a_{i+1}^{2}a_i a_{i+1}^{-2}\right)\left(a_{i+1}a_i a_{i+1}^{-1}\right).$$
Therefore, we have 
$$a_{i+1}^{-m_2}\left(a_{i+1} a_i^{-1}\right)^{m_2} \ge 1.$$
So we have 
$$ a_{i+1}^{-m_2} b_i^{-1} a_{i}^{m_2}=a_{i+1}^{-m_2}\left(a_{i+1} a_i^{-1}\right)^{m_2} b_i^{-1} \ge 1,$$
which implies the conclusion.
\end{proof}
\begin{lemma}\label{l-2}
If $\forall m.\left(b_i^{-1} a_{i+1}^{m}\ge 1\right),$ then $\forall m.\left(b_{i-1}^{-1} a_{i}^{m}\ge 1\right).$
\end{lemma}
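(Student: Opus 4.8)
The plan is to eliminate $b_{i-1}$ using the second defining relation and then feed the resulting statement into Lemma \ref{l-1}. Solving $a_i^{k_i}=b_ia_ib_ib_{i-1}^{-1}$ for $b_{i-1}^{-1}$ gives $b_{i-1}^{-1}=b_i^{-1}a_i^{-1}b_i^{-1}a_i^{k_i}$, so for every $m$
\[ b_{i-1}^{-1}a_i^{m}=b_i^{-1}a_i^{-1}b_i^{-1}a_i^{k_i+m}=(b_i^{-1}a_i^{-1})^{2}a_i^{k_i+m+1}. \]
Since $k_i+m+1$ ranges over all of $\mathbb{Z}$, the conclusion is equivalent to $(b_i^{-1}a_i^{-1})^{2}a_i^{N}\ge 1$ for all $N$, i.e. to the assertion that $(b_i^{-1}a_i^{-1})^{2}$ is an upper bound for the cyclic subgroup $\langle a_i\rangle$. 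In this language the lemma says: if $b_i^{-1}$ dominates $\langle a_{i+1}\rangle$, then $(b_i^{-1}a_i^{-1})^{2}$ dominates $\langle a_i\rangle$, which is exactly the index shift needed for the eventual cyclic induction.

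Before the main step I would record a few cheap consequences of the hypothesis. Taking $m=0$ gives $b_i^{-1}\ge 1$. Taking $m=-1$ and using $a_{i+1}^{-1}=a_i^{-1}b_i^{-1}a_i^{-1}b_i$ gives $b_i^{-1}a_{i+1}^{-1}=(b_i^{-1}a_i^{-1})^{2}b_i\ge 1$, hence $(b_i^{-1}a_i^{-1})^{2}\ge b_i^{-1}\ge 1$ and therefore $b_i^{-1}a_i^{-1}\ge 1$. From the first defining relation $a_{i+1}=b_i^{-1}a_ib_ia_i$ one obtains the conjugation identities $b_i^{-1}a_i^{M}=(a_{i+1}a_i^{-1})^{M}b_i^{-1}$ and $b_i^{-1}a_i^{-1}=a_ia_{i+1}^{-1}b_i^{-1}$, which let me trade $a_i$'s for $a_{i+1}$'s. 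The core step is then to apply Lemma \ref{l-1} with $m_2=k_i+m$: it yields an integer $m_1$ with $b_i^{-1}a_i^{k_i+m}\ge a_{i+1}^{m_1}$. Left-multiplying by $b_i^{-1}a_i^{-1}$ and substituting the second conjugation identity reduces the goal to $a_ia_{i+1}^{-1}\,b_i^{-1}a_{i+1}^{m_1}\ge 1$; since $b_i^{-1}a_{i+1}^{m_1}\ge 1$ by hypothesis, everything collapses to controlling the leftover factor $a_ia_{i+1}^{-1}=b_i^{-1}a_i^{-1}b_i$.

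This leftover factor is where the real difficulty lies: its sign is not determined by the hypothesis at a single exponent, and the naive estimate only gives $b_{i-1}^{-1}a_i^{m}\ge a_ia_{i+1}^{-1}$, which is too weak (it amounts to throwing away exactly the data encoded by $(b_i^{-1}a_i^{-1})^{2}$ rather than its square). The hard part will be to show that this factor cannot drag the product below $1$, and I expect this to force an appeal to the hypothesis at all exponents at once — the same device that powers Lemma \ref{l-1}, where $(a_{i+1}a_i^{-1})^{m}$ is expanded into a string of conjugates $a_{i+1}^{-j}a_i^{\mp 1}a_{i+1}^{j}$, each individually controlled by the hypothesis. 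Concretely I would either induct on $N$, propagating $(b_i^{-1}a_i^{-1})^{2}\ge a_i^{N}$ one exponent at a time, or argue by contradiction in the style of Lemma \ref{l-1}: assume $b_i^{-1}a_i^{-1}b_i^{-1}a_i^{M}\le 1$ for some $M$, extract sign information on $a_i^{M}$ and $(a_{i+1}a_i^{-1})^{M}$, and recombine it with $b_i^{-1}a_i^{-1}\ge 1$ and the hypothesis to reach a contradiction. The combinatorial bookkeeping of these conjugates, which must absorb the residual factor uniformly in $M$, is the main obstacle I anticipate.
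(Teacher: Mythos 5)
Your setup is correct and runs parallel to the paper's up to the point where you apply Lemma \ref{l-1} and are left with the factor $a_ia_{i+1}^{-1}=b_i^{-1}a_i^{-1}b_i$ — but there the argument stops. You explicitly defer the step ``show that this factor cannot drag the product below $1$,'' and that is exactly the content of the lemma: the sign of the conjugate $b_i^{-1}a_i^{-1}b_i$ is not controlled by the hypothesis at any single exponent, and neither of your two suggested continuations (induction on $N$, or a contradiction argument in the style of Lemma \ref{l-1}) is carried out. As written this is a genuine gap, not a proof. (A smaller issue: your inference from $(b_i^{-1}a_i^{-1})^{2}b_i\ge 1$ to $(b_i^{-1}a_i^{-1})^{2}\ge b_i^{-1}$ uses right-invariance, which a left order does not provide; the weaker statement $(b_i^{-1}a_i^{-1})^{2}\ge 1$ does follow, being the product of the two nonnegative elements $(b_i^{-1}a_i^{-1})^{2}b_i$ and $b_i^{-1}$, but none of this is needed.)

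The paper sidesteps the leftover conjugate entirely by grouping differently. Instead of left-multiplying the Lemma \ref{l-1} inequality by $b_i^{-1}a_i^{-1}$, first rewrite the prefix using $a_{i+1}^{-1}=a_i^{-1}b_i^{-1}a_i^{-1}b_i$, which gives $b_{i-1}^{-1}a_i^{m}=b_i^{-1}a_i^{-1}b_i^{-1}a_i^{m+k_i}=b_i^{-1}a_{i+1}^{-1}b_i^{-1}a_i^{m+k_i+1}$. Now apply Lemma \ref{l-1} with $m_2=m+k_i+1$ to obtain $m_1$ with $a_{i+1}^{-m_1}b_i^{-1}a_i^{m+k_i+1}\ge 1$, and split
$b_{i-1}^{-1}a_i^{m}=\left(b_i^{-1}a_{i+1}^{m_1-1}\right)\left(a_{i+1}^{-m_1}b_i^{-1}a_i^{m+k_i+1}\right)$.
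The first factor is $\ge 1$ because the hypothesis is quantified over all exponents, including $m_1-1$; the second is $\ge 1$ by Lemma \ref{l-1}; and a product of elements $\ge 1$ is $\ge 1$ under left-invariance. The idea you were missing is that the troublesome $a_{i+1}^{-1}$ can be positioned immediately to the right of a $b_i^{-1}$, where the universally quantified hypothesis absorbs it together with the $a_{i+1}^{-m_1}$ produced by Lemma \ref{l-1}; no control on the conjugate $b_i^{-1}a_i^{-1}b_i$ is ever required.
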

\begin{proof}
Because $a_{i+1}=b_i^{-1} a_i  b_i a_i$ and $a_i^{k_i}=b_i a_i b_i b_{i-1}^{-1}$, we have
$$b_{i-1}^{-1} a_i^{m}= b_i^{-1} a_i^{-1} b_i^{-1} a_i^{m+k_i}= b_i^{-1} a_{i+1}^{-1} b_i^{-1} a_i^{m+k_i+1}.$$
By Lemma \ref{l-1}, we have $$\exists m_1.\left(a_{i+1}^{-m_1} b_i^{-1} a_i^{m+k_i+1} \ge 1\right).$$
By assumption, we have $$b_i^{-1} a_{i+1}^{m_1-1}\ge 1.$$
Therefore $$b_{i-1}^{-1} a_i^{m}=\left(b_i^{-1} a_{i+1}^{m_1-1}\right)\left(a_{i+1}^{-m_1} b_i^{-1} a_i^{m+k_i+1}  \right)\ge 1.$$
\end{proof}

Let us apply the fixed point method on $a_1$. This is a common technique in the proofs of many non-left-orderability results. If the group $\pi_1(\Sigma_2(L(k_1,k_2,\ldots, k_n)))$ has a left order $\le$, then \cite{Holland} there exists a homomorphism $\rho$ from this group to $\mbox{Homeo}_+(\mathbf{R})$ with no global fixed points, and $g\le h$ if and only if $\rho(g)(0)\le \rho(h)(0)$. Then for the element $a_1$, there are two situations: 

\begin{enumerate}[(1)]
    \item If $\rho(a_1)$ has a fixed point $s$, then there is a left total preorder $\le_{a_1}$, defined as $g\le h$ if and only if $\rho(g)(s)\le \rho(h)(s)$, with $a_1\ge_{a_1} 1$ and $a_1^{-1} \ge_{a_1} 1$.
    \item Otherwise, any conjugate of $a_1$ has the same sign as $a_1$.
\end{enumerate}

Let us assume the first situation. Without loss of generality, we assume that $b_n \le_{a_1} 1$, then $\forall m.\left(b_n^{-1} a_{1}^{m}\ge_{a_1} 1\right).$ Since the antisymmetry is never used in the proofs of Lemma \ref{l-1} and Lemma \ref{l-2}, they apply to $\le_{a_1}$. By inductive application of Lemma \ref{l-2}, we have $\forall m.\left(b_{i}^{-1} a_{i+1}^{m}\ge_{a_1} 1\right)$ for any $i=1,\ldots,n$. By Lemma \ref{l-1}, we have the relation $\forall m_2\exists m_1 .\left(a_{i+1}^{-m_1} b_i^{-1} a_{i}^{m_2}\ge_{a_1} 1\right)$ for any $i=1,\ldots,n$.

Because $a_{i+1}=b_i^{-1} a_i  b_i a_i$ and $a_i^{k_i}=b_i a_i b_i b_{i-1}^{-1}$, we have
$$b_i^{-1} a_i^{k_i}= a_i b_i b_{i-1}^{-1}=b_i a_{i+1}a_i^{-1} b_{i-1}^{-1}= (b_i a_{i+1})(b_{i-1}a_i)^{-1}.$$
Hence we have $$\forall m_2\exists m_1 .\left( (b_{i-1}a_i)^{-1} a_{i}^{m_2}\ge_{a_1} (b_i a_{i+1})^{-1} a_{i+1}^{m_1}\right).$$
By induction, we have $$\forall m_2\exists m_1 .\left( (b_{1}a_2)^{-1} a_{2}^{m_2}\ge_{a_1} (b_n a_{1})^{-1} a_{1}^{m_1}\right),$$
especially
$$\exists m_1 .\left( (b_{1}a_2)^{-1} \ge_{a_1} (b_n a_{1})^{-1} a_{1}^{m_1}\right).$$
Then we have 
$$\exists m_1 .\left( 1 \ge_{a_1} b_1^{-1} a_1^{m_1+k_1}\right),$$
which implies $b_1\ge_{a_1} 1$. Remember that $\forall m.\left(b_{i}^{-1} a_{i+1}^{m}\ge_{a_1} 1\right)$ implies $b_{1}^{-1}\ge_{a_1} 1$. 

Therefore every fixed point of $a_1$ is also a fixed point of $b_1$. By $a_{i+1}=b_i^{-1} a_i  b_i a_i$, every fixed point of $a_1$ is also a fixed point of $a_2$. By symmetry and induction, any fixed point of $a_1$ is a global fixed point.

Now we assume any conjugate of $a_1$ is positive. By $a_{i+1}=b_i^{-1} a_i  b_i a_i$, if any conjugate of $a_i$ is positive, then any conjugate of $a_{i+1}$ is positive. By induction, any conjugate of $a_i$ is positive. By $a_{i+1}a_i^{-1}=b_i^{-1} a_i  b_i \ge 1$ and their product is identity, we have $a_i=a_{i+1}$ for all $i=1,\ldots, n$. Furthermore, we have $a_i=1$. Then the fundamental group is finite, so it is not left-orderable.

Therefore, we proved the non-left-orderability.

\end{document}